\newtheorem {theorem} {Theorem}
\newtheorem {proposition} [theorem]{Proposition}
\newtheorem {corollary} [theorem]{Corollary}
\newtheorem {conjecture} [theorem]{Conjecture}
\newcommand{\R}{\mathbb{R}}
\newcommand{\C}{\mathbb{C}}
\newcommand{\X}{\mathcal X}
\begin{document}
\title[Quadratic polynomial
differential systems with algebraic solutions ] {A family of
quadratic polynomial differential systems with algebraic solutions
of arbitrary high degree}
\date{}
\dedicatory{} \maketitle
\centerline{R. Ram\'{\i}rez$^1$, V. Ram\'{\i}rez$^2$ }

\smallskip

\centerline{$^1$ Departament d'Enginyeria Inform\`{a}tica i
Matem\`{a}tiques.} \centerline{ Universitat Rovira i Virgili.}
\centerline{ Avinguda dels Pa\"{\i}sos Catalans 26, 43007 Tarragona,
Spain.} \centerline{E-mail:rafaelorlando.ramirez@urv.cat}

\smallskip

 \centerline{$^2$ Universitat Central de
Barcelona,} \centerline{ Gran V\'{\i}a de las Cortes Catalanas, 585
08007 Barcelona,
 Spain.}
\centerline{E-mail:vramirsa8@alumnes.ub.edu}

\subjclass[2010]{Primary 14P25, 34C05, 34A34.}

\keywords{orthogonal polynomial, quadratic planar differential
system, algebraic curves, Liouvillian first integral, Fucsh's
equation.}

\begin{abstract}
We show that the algebraic curve $a_0(x)(y-r(x))+p_2(x)a'(x)=0,$
where $r(x)$ and $p_2(x)$ are polynomial of degree 1 and 2
respectively and $a_0(x)$ is a polynomial solution of the convenient
Fucsh's equation, is an invariant curve of the  quadratic planar
differential system. We study the particular case when $a_0(x)$ is
an orthogonal polynomials. We prove that that in this case the
quadratic differential system is Liouvillian integrable.
\end{abstract}
\section{Introduction and statement of the main results}

Consider the set $\Sigma$ of all planar real polynomial vector
fields $\X=(P,Q)$ associated to the differential polynomial systems
\begin{equation}\label{KLK}
\dot{x}=P(x,y),\quad \dot{y}=Q(x,y).
\end{equation}
of degree $m=\max\left\{\mbox{deg{P}},\,\mbox{deg{Q}}\right\},$ here
the dot denotes derivative respect to time $t.$

\smallskip

Let $U$ be an open and dense set in $\R^2$. We say that a
non-constant $C^1$ function $H \colon U \to \R$ is a \emph{first
integral} of the polynomial vector field $\X$ on $U$, if
$H(x(t),y(t))$ is constant for all values of $t$ for which the
solution $(x(t),y(t))$ of $\X$ is defined on $U$. Clearly $H$ is a
first integral of $\X$ on $U$ if and only if $\X H=0$ on $U$.

\smallskip

Let $\C[x,y]$ be the ring of all complex polynomials in the
variables $x$ and $y$, and let $\X$ be a polynomial vector field of
degree $m$, and let $g=g(x,y)\in \C[x,y]$. Then $g=0$ is an {\it
invariant algebraic curve} of $\X$ if
\begin{equation}\label{KLK1}
\X g=P\dfrac{\partial g}{\partial x}+Q\dfrac{\partial g}{\partial
y}=K g,
\end{equation}
where $K=K(x,y)$ is a polynomial of degree at most $m-1$, which is
called the {\it cofactor} of $g=0$ (for more details see for
instance \cite{Ll}). . If the polynomial $g$ is irreducible in
$\C[x,y]$, then we say that the invariant algebraic curve $g=0$ is
{\it irreducible} and that its {\it degree} is the degree of the
polynomial $g$. We work with real polynomial vector fields but for
these vector fields we consider complex invariant algebraic curves
because sometimes the existence of a real first integral is forced
by the existence of complex invariant algebraic curves, for more
details on the so--called Darboux theory of integrability see for
instance the Chapter 8 of \cite{DLA}.

\smallskip

A non-constant function $R \colon U \to \R$ is an \emph{integrating
factor} for the polynomial vector field $\X$, if one of the
following three equivalent conditions holds
\begin{equation}\label{eq:triangle}
\frac{\partial (RP)}{\partial x} =-\frac{\partial(RQ)}{\partial y},
\quad \text{div}\, (RP,RQ) =0, \quad \X R =-R \, \text{div} \, (P,Q)
\end{equation}
on $U$. As usual the \emph{divergence} of the vector field $\X$ is
defined by
\[
\text{div} \,(P,Q) = \frac{\partial P}{\partial x } + \frac{\partial
Q}{\partial y}.
\]
Knowing an integrating factor $R$ of the differential system
\eqref{KLK} we can compute a first integral $H$ of $\X$ as follows
\[
H=-\int R(x,y) P(x,y)\, d y + h(x),
\]
where the function $h(x)$ is determined from the equality
$\displaystyle \frac{\partial H}{\partial x} = R(x,y) Q(x,y)$.

\smallskip

Let $f_i,g_j,h_j \in \C[x,y]$ for $i=1,\ldots,p$ and $j=1,\ldots,q$.
Then the (multi--valued) function
\begin{equation}\label{eq:10}
f_1^{\lambda_1} \cdots f_p^{\lambda_p} e^{\mu_1 g_1/h_1} \cdots
e^{\mu_q g_q/h_q}
\end{equation}
with $\lambda_i,\mu_j \in \C$ is called a \emph{(generalized)
Darboux function.}

\smallskip

We say that a polynomial differential system \eqref{KLK} is
\emph{Liouvillian integrable} if it has a first integral or an
integrating factor given by a generalized Darboux function, for more
details see Singer \cite{S}.

\smallskip

 The function $H=H(x,y)$ defined
in an open subset $\tilde{\textsc{D}}_1$ of $\textsc{D}$ such that
its closure coincides with $\textsc{D}$ is called a {\it first
integral} if it is constant on the solutions of system \eqref{00}
contained in $\tilde{\textsc{D}}_1$, i.e. $\X(H)|_{\tilde{
\textsc{D}}_1}=0$.

\smallskip

{F}rom Jouanolou's Theorem (see for instance \cite{Jou,Ll}) it
follows that for a given polynomial differential system of degree
$m$ the maximum \textit{degree of its irreducible invariant
algebraic curves} is bounded, since either it has a finite number
$p<\dfrac{1}{2}m(m+1)+2,$
 of invariant algebraic curves, or all its trajectories are
contained in invariant algebraic curves and the system admits a
\textit{rational first integral}. Thus for each polynomial system
there is a natural number $N$ which bounds the degree of all its
irreducible invariant algebraic curves. A natural question, going
back to Poincar\'e ( for more details see \cite{Po}), is to give an
effective procedure to find $N.$ Partial answer to this question
were given in  \cite{Campilo,Ca,CLN}. Of course, given such a bound,
it is easy to compute the algebraic curves of the system.

\smallskip

Unfortunately, for the class of polynomial systems with fixed degree
$m$, there does not, exist a uniform upper bound for $N$ (see for
instance \cite{Ll}).

\smallskip

Another common suggestion is there is some number $M(m)$ for which
all polynomials systems of degree $m$ with invariant algebraic
curves of degree greatest that $M(m)$ has a rational first integral
(see for instance \cite{CLL}). In \cite{CLL} and \cite{Ol} was
proved that no such function $M(m)$ exist.

\smallskip

The aim of this paper, by developing the idea given in \cite{CLL},
is to give a family of polynomials systems of degree 2 without
rational first integral but with irreducible invariant algebraic
curves of arbitrary high degree.

\smallskip

The following  results we can find in \cite{LR0,LRS}).
\begin{proposition}\label{Nat}
 The most general differential
system whit invariant algebraic curve $g=g(x,y)=0$ is
\[\dot{x}=-\nu\dfrac{\partial g}{\partial y}+\lambda_1g,\quad \dot{y}=\nu\dfrac{\partial g}{\partial
x}+\lambda_2g,\] where $\nu,\,\lambda_1$ and $\lambda_2$ are
arbitrary functions.
\end{proposition}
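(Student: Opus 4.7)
The statement is an equivalence, so the argument splits into a sufficiency direction and a necessity direction. The sufficiency is the easy half and I would dispatch it by direct substitution. If $P=-\nu g_{y}+\lambda_{1}g$ and $Q=\nu g_{x}+\lambda_{2}g$, then
\[
\X g \;=\; P g_{x}+Q g_{y} \;=\; \bigl(-\nu g_{y}+\lambda_{1}g\bigr)g_{x}+\bigl(\nu g_{x}+\lambda_{2}g\bigr)g_{y} \;=\; \bigl(\lambda_{1}g_{x}+\lambda_{2}g_{y}\bigr)\,g,
\]
the cross terms $\pm\nu g_{x}g_{y}$ cancelling. Hence $g=0$ is an invariant curve with cofactor $K=\lambda_{1}g_{x}+\lambda_{2}g_{y}$, as in \eqref{KLK1}.

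For the necessity direction, I would start from an arbitrary polynomial system \eqref{KLK} with $g=0$ invariant, i.e.\ $Pg_{x}+Qg_{y}=K g$, and reconstruct the three functions $\nu,\lambda_{1},\lambda_{2}$. Geometrically, restricted to $\{g=0\}$ the vector $(P,Q)$ must be tangent to the curve, hence proportional to $(-g_{y},g_{x})$; away from the curve, $(P,Q)$ has in addition two ``normal-like'' directions represented by $(g,0)$ and $(0,g)$, giving three free parameters and matching the three unknowns $\nu,\lambda_{1},\lambda_{2}$. The concrete construction I would use is to orthogonally decompose $(P,Q)$ with respect to the basis $(-g_{y},g_{x})$, $(g_{x},g_{y})$ at every regular point of $g$: the definitions
\[
\nu \;=\; \dfrac{Qg_{x}-Pg_{y}}{g_{x}^{2}+g_{y}^{2}}, \qquad \lambda_{1}\;=\;\dfrac{g_{x}K}{g_{x}^{2}+g_{y}^{2}}, \qquad \lambda_{2}\;=\;\dfrac{g_{y}K}{g_{x}^{2}+g_{y}^{2}}
\]
give, after a short calculation in which the identity $Pg_{x}+Qg_{y}=Kg$ is substituted, exactly $P=-\nu g_{y}+\lambda_{1}g$ and $Q=\nu g_{x}+\lambda_{2}g$.

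The main obstacle I foresee is the behaviour at the singular locus $\{g_{x}=g_{y}=0\}$, where the denominator $g_{x}^{2}+g_{y}^{2}$ vanishes and the explicit formulas above fail. Since the proposition only asks for $\nu,\lambda_{1},\lambda_{2}$ to be arbitrary functions (no polynomiality or global smoothness is imposed), I would handle this in two ways: first, noting that for a polynomial $g$ with no repeated factors the common zero set of $g,g_{x},g_{y}$ is finite, one sees that the above $\nu,\lambda_1,\lambda_2$ are defined off a set of measure zero; secondly, since the system \(P=-\nu g_y+\lambda_1 g\), \(Q=\nu g_x+\lambda_2 g\) consists of only two equations in three unknowns, there is genuine freedom in the choice of $\nu$, and one may exploit it to patch local decompositions together over the singular points. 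I expect that writing out this patching (or equivalently, verifying that the chosen $\nu$ extends across the finitely many singular points) is the only delicate step.
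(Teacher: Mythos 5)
The paper contains no proof of Proposition \ref{Nat}: it is imported from \cite{LR0,LRS} (``The following results we can find in...''), so there is no in-paper argument to compare yours against, and your write-up has to be judged on its own. It is essentially correct and is the standard argument for this folklore result. The sufficiency computation is right: the terms $\mp\nu g_xg_y$ cancel and $g=0$ is invariant with cofactor $K=\lambda_1g_x+\lambda_2g_y$ (note only that for arbitrary $\lambda_1,\lambda_2$ this cofactor need not be polynomial, so ``invariant'' must be read in the weak sense that $\X g$ vanishes on $g=0$ rather than in the sense of \eqref{KLK1}). For the necessity, your explicit $\nu,\lambda_1,\lambda_2$ do recover $P$ and $Q$ once $Pg_x+Qg_y=Kg$ is substituted; I checked the algebra and it closes. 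The one imprecision is in your account of where the construction degenerates: the denominator $g_x^2+g_y^2$ vanishes on the whole critical set $\{g_x=g_y=0\}$ of $g$, which for a real polynomial need not be finite, and not merely on the singular locus of the curve. This is easily repaired: off the curve one may instead take $\nu=0$, $\lambda_1=P/g$, $\lambda_2=Q/g$, so the only points where no choice of $\nu,\lambda_1,\lambda_2$ exists are those with $g=g_x=g_y=0$, a finite set for squarefree $g$. At such a point every right-hand side of the proposed form vanishes, so the literal identity can hold only if $P$ and $Q$ also vanish there; that does not follow from your patching remark and would need a separate (short) argument about singular points of invariant curves, or one simply states the representation off that finite set. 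Since the proposition is formulated with completely arbitrary functions and is never used quantitatively later in the paper, this caveat is harmless, and your proof is already more careful than the source it replaces.
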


\begin{theorem}\label{PP1}
The  algebraic curve $g=a_0(x)y+a_1(x)=0$ is invariant for the
quadratic system
\begin{equation}\label{Nat1}
\begin{array}{rl}
\dot{x}=&p_{22}x^2+p_{21}x+p_{20}:=p_2(x),\vspace{0.2cm}\\
\dot{y}=&q_0y^2+(q_{11}x+q_{10})y+q_{22}x^2+q_{21}x+q_{20}:=q_0y^2+q_1(x)y+q_{2}(x),
\end{array}
\end{equation}
 with cofactor $K=\alpha y+\beta x+\gamma$ if
and only if \[ \alpha=q_0,\quad \alpha
a_1(x)=p_2a'_0(x)-\left((\beta
-q_{11})x+\gamma-q_{10}\right)a_0(x),\] and $a_0=a_0(x)$ is a
polynomial solution of the \textit{ Fucsh's equation}

\begin{equation}\label{Rf2}
\begin{array}{rl}
&w''+\dfrac{\left(2p_{22}+q_{11}-2\beta\right)x+2p_{21}+q_{10}-2\gamma}{p_{22}x^2+p_{21}x+p_{20}}\,w'-
\dfrac{q_{11}-\beta}{p_{22}x^2+p_{21}x+p_{20}}\,w\vspace{0.2cm}\\
&-\dfrac{\left(\beta^2+\alpha q_{22}-\beta
q_{11}\right)x^2+\left(\alpha q_{21}+2\gamma
\beta-q_{10}\beta-\gamma q_{11}\right)x+\gamma^2-\gamma
q_{10}+\alpha q_{20}}{\left(p_{22}x^2+p_{21}x+p_{20}\right)^2}\,w=0
\end{array}
\end{equation}
\end{theorem}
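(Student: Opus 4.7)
\medskip

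\textbf{Proof plan.} The strategy is simply to write down the invariance equation $\X g=Kg$ for $g=a_0(x)y+a_1(x)$ and match coefficients of powers of $y$. With $P=p_2(x)$ and $Q=q_0y^2+q_1(x)y+q_2(x)$ one has
\[
\X g \;=\; q_0a_0(x)\,y^2+\bigl(p_2(x)a_0'(x)+q_1(x)a_0(x)\bigr)y+\bigl(p_2(x)a_1'(x)+q_2(x)a_0(x)\bigr),
\]
while $Kg=\alpha a_0y^2+\bigl(\alpha a_1+(\beta x+\gamma)a_0\bigr)y+(\beta x+\gamma)a_1$.

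First I would match coefficients of $y^j$ for $j=2,1,0$. The $y^2$--coefficient gives immediately $\alpha=q_0$. Setting $\sigma(x):=(\beta-q_{11})x+(\gamma-q_{10})$, the $y^1$--coefficient yields the stated relation
\[
\alpha\,a_1(x)=p_2(x)a_0'(x)-\sigma(x)a_0(x),
\]
and the $y^0$--coefficient gives the auxiliary identity $p_2(x)a_1'(x)+q_2(x)a_0(x)=(\beta x+\gamma)a_1(x)$.

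The key step is to eliminate $a_1$ between these two remaining equations. Differentiating the $y^1$ relation gives $\alpha a_1'=p_2 a_0''+(p_2'-\sigma)a_0'-\sigma'a_0$; substituting this together with $\alpha a_1=p_2a_0'-\sigma a_0$ into the (multiplied by $\alpha$) $y^0$ equation produces, after collecting terms,
\[
p_2^{\,2}\,a_0''+p_2\bigl(p_2'-\sigma-(\beta x+\gamma)\bigr)a_0'+\bigl(-p_2\sigma'+\alpha q_2+(\beta x+\gamma)\sigma\bigr)a_0=0.
\]
Dividing by $p_2^{\,2}$ and expanding $p_2'=2p_{22}x+p_{21}$, $\sigma'=\beta-q_{11}$, and $(\beta x+\gamma)\sigma+\alpha q_2$ in closed form (which gives precisely the polynomial $B(x)$ appearing in the numerator of the last term of \eqref{Rf2}) produces the Fuchs equation \eqref{Rf2}. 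This algebraic bookkeeping is the only real work in the proof.

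For the converse, assuming $\alpha=q_0\neq 0$, $a_0(x)$ a polynomial solution of \eqref{Rf2}, and $a_1(x)$ defined by the displayed formula (which is automatically a polynomial of degree at most $\deg a_0+1$), one retraces the steps: the $y^2$ and $y^1$ coefficients of $\X g-Kg$ vanish by construction, and the Fuchs equation guarantees the vanishing of the $y^0$ coefficient, so $\X g=Kg$ holds and $g=0$ is invariant with the prescribed cofactor. The main (minor) obstacle is purely organizational: keeping track of the four collections of constants $\{p_{2j}\},\{q_{ij}\},\alpha,\beta,\gamma$ in the expansion of the coefficient of $a_0$ so that it matches \eqref{Rf2} term by term.
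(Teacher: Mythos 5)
First, a point of comparison: the paper contains no proof of Theorem \ref{PP1} at all --- it is quoted from \cite{LR0,LRS} --- so there is nothing internal to measure your argument against. Your direct verification (expand $\X g-Kg$ in powers of $y$, read off $\alpha=q_0$ and the formula for $a_1$ from the $y^2$ and $y^1$ coefficients, eliminate $a_1$ from the $y^0$ coefficient to get a second--order equation for $a_0$) is the natural and essentially the only argument, and your intermediate identity
\[
p_2^{2}a_0''+p_2\bigl(p_2'-\sigma-(\beta x+\gamma)\bigr)a_0'+\bigl(-p_2\sigma'+\alpha q_2+(\beta x+\gamma)\sigma\bigr)a_0=0
\]
is correct, as is the converse direction, including the remark that $\alpha=q_0\neq 0$ is what allows $a_1$ to be defined as a polynomial.

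The problem is the one step you assert instead of performing: the claim that dividing by $p_2^2$ and expanding ``produces the Fuchs equation \eqref{Rf2}.'' It does not produce \eqref{Rf2} as printed. Carrying the expansion out, using $-\sigma'=q_{11}-\beta$ and the fact that the constant term of $p_2'-\sigma-(\beta x+\gamma)$ is $p_{21}+q_{10}-2\gamma$, one gets
\[
w''+\frac{\left(2p_{22}+q_{11}-2\beta\right)x+p_{21}+q_{10}-2\gamma}{p_2}\,w'
+\frac{q_{11}-\beta}{p_2}\,w
+\frac{\left(\beta^2+\alpha q_{22}-\beta q_{11}\right)x^2+\cdots}{p_2^{2}}\,w=0.
\]
Compared with \eqref{Rf2} this has the opposite sign on both zeroth--order terms and $p_{21}$ in place of $2p_{21}$ in the coefficient of $w'$ (the quadratic polynomial in the last numerator does agree term by term with the one you describe). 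So either your elimination has a sign error --- it does not; I have rechecked it --- or \eqref{Rf2} as stated contains misprints. A complete proof must display the expansion and confront it with \eqref{Rf2}; at that point the discrepancy surfaces and has to be resolved explicitly (most plausibly by correcting \eqref{Rf2}). As written, the sentence ``this algebraic bookkeeping is the only real work in the proof'' is precisely the point at which your argument stops being checkable, and if taken literally it proves a slightly different theorem than the one stated.
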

 Henceforth we shall consider that
$q_0=\alpha=1.$

\smallskip

Our main results are the following
\begin{theorem}\label{A1}
Under the assumptions of Theorem \ref{PP1} we obtain that the most
general quadratic polynomial differential system which admits the
invariant algebraic curve
\begin{equation}\label{ORT}
a_0\left(y-\left((\beta-q_{11})x+\gamma-q_{10}\right)\right)+p_2a'_0=0,
\end{equation}
 where
$a_0=a_0(x)$ is a solution of the equation
\begin{equation}\label{Rr01}
 p_2(x)a''_0+ r(x)a'_0+{\kappa} a_0=0.
\end{equation}
where $\kappa$ is a nonzero constant and
$r(x)=\left(2p_{22}+q_{11}-\beta\right)x+\gamma+p_{21}-q_{10},$ is
the system
\begin{equation}\label{Rf4}
\begin{array}{rl}
\dot{x}=&p_{22}x^2+p_{21}x+p_{20},\vspace{0.2cm}\\
\dot{y}=&q_0y^2+\left(2\beta-2p_{22}+\tau_{11}\right)xy+\left( 2\gamma+\tau_{10}-p_{21} \right)y\vspace{0.2cm}\\
&+\left(2p^2_{22}-\left(\tau_{11}-\tau_0+3\beta\right)p_{22}+\beta^2+\beta\tau_{11}\right))x^2\vspace{0.2cm}\\
&+\left(2p_{22}-\left(2\beta-\tau_0+\tau_{11}\right)p_{21}+
\gamma\left(\tau_{11}-2p_{22}+2\beta\right)+\beta\tau_{10}\right)x\vspace{0.2cm}\\
&+p_{20}\left(2p_{22}-\tau_{11}+\tau_0-\beta\right)+\gamma\left(\gamma+\tau_{10}-p_{21}\right),
\end{array}
\end{equation}
where $\tau_{11},\,\tau_{21}$ and $\tau_{20}$ are convenient
constants.
\end{theorem}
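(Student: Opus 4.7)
The plan is to combine Theorem~\ref{PP1} with the additional ODE~\eqref{Rr01} and translate the combination into conditions on the coefficients of~\eqref{Nat1}. With $\alpha=q_0=1$, Theorem~\ref{PP1} already forces
\[a_1(x)=p_2(x)\,a'_0(x)-\bigl((\beta-q_{11})x+\gamma-q_{10}\bigr)\,a_0(x),\]
so the invariant curve $a_0 y+a_1=0$ coincides with~\eqref{ORT}, and the polynomial $a_0$ must satisfy the Fuchs equation~\eqref{Rf2}. The task is thus to characterize those choices of parameters for which~\eqref{Rf2} admits a polynomial solution that simultaneously solves~\eqref{Rr01}.

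After multiplying~\eqref{Rf2} through by $p_2$, its $w$-coefficient becomes $-(q_{11}-\beta)-c(x)/p_2(x)$, where $c(x)$ is the quadratic polynomial appearing in the last numerator of~\eqref{Rf2}. In order that $a_0$ simultaneously satisfy~\eqref{Rr01}, this coefficient must collapse to the constant $\kappa$, which happens if and only if
\[c(x)=-\bigl(\kappa+q_{11}-\beta\bigr)\,p_2(x).\]
Equating the coefficients of $x^2,\,x^1,\,x^0$ gives three linear equations that determine $q_{22},\,q_{21},\,q_{20}$ uniquely in terms of $p_{22},p_{21},p_{20}$, $q_{11},q_{10}$, $\beta,\gamma$ and $\kappa$. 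Matching the $w'$-coefficients of the rescaled~\eqref{Rf2} and of~\eqref{Rr01} then pins down the form of $r(x)$ displayed in~\eqref{Rr01}.

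The last step is a reparametrization. Introducing the auxiliary constants
\[\tau_{11}:=q_{11}-2\beta+2p_{22},\qquad \tau_{10}:=q_{10}-2\gamma+p_{21},\qquad \tau_0:=-\kappa,\]
these relations invert to express $q_{11},q_{10}$ and $\kappa$ in terms of the new parameters. Substituting the resulting formulas together with the expressions for $q_{22},q_{21},q_{20}$ obtained above into~\eqref{Nat1}, and collecting terms in $x$ and $y$, produces exactly the system~\eqref{Rf4}. The main obstacle is purely computational bookkeeping: the substitutions must be organized with care so that each coefficient of $\dot y$ comes out in the precise form displayed; no conceptual difficulty arises once the reparametrization above has been identified.
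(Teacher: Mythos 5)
Your proposal follows essentially the same route as the paper: impose that the free quadratic part $q_2$ of $\dot y$ is chosen so that the Fuchs equation \eqref{Rf2} degenerates to the second--order equation \eqref{Rr01}, solve the resulting coefficient identities for $q_{22},q_{21},q_{20}$, and reparametrize via $\tau_{11},\tau_{10},\tau_0$ before substituting back into \eqref{Nat1}. The only deviation is your normalization $\tau_0:=-\kappa$, whereas the paper's direct comparison of \eqref{Rr01} with \eqref{RR01} gives $\kappa=\tau_0$; this is a sign--bookkeeping point (entangled with the paper's own sign conventions in \eqref{Rf2}), not a difference of method.
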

The problem which appear is to deduce the conditions under which
Fucsh equation \eqref{Rf2} admits polynomial solutions. We shall
study the subcase when the sought polynomials are hypergemetric
functions (with convenient conditions on the parameters) and
orthogonal polynomials.

\smallskip

 A very important subcase of \eqref{Rr01} is
the hypergeometric differential equation
\begin{equation}\label{HypEq}
x(1-x)w''+\left(c-(a+b+1)x\right)w'-abw=0,
\end{equation}
the solutions of this equation are the hypergemetric functions . We
observe that the hypergeometric differential equation admits a
polynomial solution if $a$ or $b$ are non-positive integer (for more
details see \cite{Abr} ).

It is well known that  orthogonal polynomials $f_0,\,f_1,\,\ldots
f_n,\,\ldots$ are the ones satisfying the differential equation (for
more details see for instance \cite{Abr})
\begin{equation}\label{RR01}
\tau_2(x)f''+\tau_1(x)f'+\tau_0f=0,
\end{equation} where
$\tau_j=\tau_j(x)=\displaystyle\sum_{n=1}^j\tau_{jn}x^j$ are a
polynomials of degree at most $j$, for $j=0,1,2.$
 The solution of \eqref{RR01}  is an  {\it orthogonal polynomial} if
one of the following sets of conditions hold:
\begin{itemize}
\item[(1)] The polynomial $p_2(x)$ is  quadratic with two distinct real roots, the
root of polynomial $r=r(x)$  lies strictly between the roots of
$p_2,$ and the leading terms of $p_2$ and $r$ has the same sign.
This case leads to the Jacobi-like polynomials which are solutions
of the differential equation
\begin{equation}\label{AS1}
(1-x^2)f''+(A-B-(A+B+2)x)f'+n(n+A+B+1)f=0,
\end{equation}where $A,B$ are real constants and $n$ is natural
number. Important special cases of Jacobi polynomials are Gegenbauer
polynomials ( with parameter $\gamma=A+1/2$), Legendre polynomials
($A=B=0$) and Chebyshev polynomials ($A=B=\pm 1/2$).

\item[(2)] The polynomial $p_2(x)$ is linear. The roots of $p_2$ and
$r$ are different, and the leading terms of $p_2$ and $r$ has the
same sign if the root of $r$ is less that the root of $p_2$, or
vice-versa. This case leads to the Laguerre-like polynomials which
are solutions of the differential equation
\begin{equation}\label{AS2}
xf''+(A+1-x)f'+nf=0,
\end{equation}where $A$ is a  real constants and $n$ is natural
number.
\item[(2)] $p_2$ is just a nonzero constant. The leading term of $r$
has the opposite sign of $p_2.$ This case leads to the Hermite-like
polynomials (see for instance \cite{Abr}) which are solutions of the
differential equation
\begin{equation}\label{AS3}
f''-xf'+nf=0,
\end{equation}where  $n$ is natural
number.
\end{itemize}
\smallskip

\begin{conjecture}\label{C1}
Differential  system \eqref{Rf4} admits  a Liouvillian first
integrals.
\end{conjecture}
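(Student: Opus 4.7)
The strategy is to exhibit an explicit generalized Darboux integrating factor for system \eqref{Rf4} and then invoke Singer's theorem \cite{S}. Two invariant algebraic curves are readily available. First, by Theorem \ref{A1} the curve \eqref{ORT}, henceforth denoted $g=0$, is invariant for \eqref{Rf4}; by Theorem \ref{PP1} (using the normalization $q_0=\alpha=1$) its cofactor is $K_g=y+\beta x+\gamma$. Second, since $\dot x=p_2(x)$, the curve $p_2(x)=0$ is invariant with cofactor $p_2'(x)$, and each irreducible factor of $p_2$ is an invariant line.

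\medskip

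The first key step is the divergence identity. A direct differentiation in \eqref{Rf4} yields $\partial_x\dot x=2p_{22}x+p_{21}$ and $\partial_y\dot y=2y+(2\beta-2p_{22}+\tau_{11})x+(2\gamma+\tau_{10}-p_{21})$, so that
\begin{equation*}
\text{div}(\X)=2y+(2\beta+\tau_{11})x+(2\gamma+\tau_{10})=2K_g+\tau_{11}x+\tau_{10}.
\end{equation*}
This identity is exactly what one hopes for: the divergence equals (minus) the cofactor of $g^{-2}$ plus a residual linear polynomial in $x$ alone. Accordingly I would seek an integrating factor of the form $R=g^{-2}\exp(h(x))$, for which $\X R/R=-2K_g+p_2(x)h'(x)$. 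Imposing $\X R=-R\,\text{div}(\X)$ cancels the $K_g$ piece and collapses to
\begin{equation*}
p_2(x)\,h'(x)=-\tau_{11}x-\tau_{10}, \qquad\text{hence}\qquad h(x)=-\int\frac{\tau_{11}x+\tau_{10}}{p_2(x)}\,dx.
\end{equation*}
In the Jacobi-like and Laguerre-like subcases $p_2$ splits into distinct linear factors and $\exp(h)$ becomes a product $\prod_i(x-x_i)^{c_i}$; in the Hermite-like subcase $p_2$ is a nonzero constant and $\exp(h)$ is the exponential of a polynomial of degree two in $x$. In every case $R$ is a generalized Darboux function of the form \eqref{eq:10}.

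\medskip

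Once such an integrating factor is exhibited, Singer's theorem forces the system to be Liouvillian integrable, and an explicit Liouvillian first integral can be produced by quadrature via the formula $H=-\int R(x,y)P(x,y)\,dy+h_1(x)$ recorded after \eqref{eq:triangle}, with $h_1$ determined from $\partial_x H=RQ$. The \emph{main obstacle} is verifying the divergence identity above: the coefficients of $\dot y$ in \eqref{Rf4} are quite intricate and one must check coefficient by coefficient that precisely the decomposition $2K_g+\tau_{11}x+\tau_{10}$ appears, with no residual contribution in $y^2$, in $xy$, or in a constant term beyond what $2K_g$ provides. Once this algebraic matching is in place, the remaining work is elementary integration of a rational function, and the three subcases corresponding to Jacobi-, Laguerre-, and Hermite-like polynomials are handled uniformly by the single formula for $h(x)$ above.
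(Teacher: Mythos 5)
Your argument is correct, and it is worth emphasizing that it does something the paper itself does not: the statement you are proving is left as a \emph{conjecture} in the paper, which only establishes Liouvillian integrability for the four named subfamilies (Theorem \ref{Fil}) by exhibiting, case by case, explicit Darboux first integrals of the form $x^{1-c}g_1/g_2$ built from hypergeometric functions and from four invariant algebraic curves whose cofactors satisfy a linear relation $\sum\lambda_iK_i=0$. Your route is the other half of Darboux theory: instead of matching cofactors to zero you match them to $-\mathrm{div}(\X)$ and produce an integrating factor. The computation checks out. From \eqref{Nat1} with $q_0=1$ one has $\mathrm{div}(\X)=2y+(2p_{22}+q_{11})x+(p_{21}+q_{10})$, which is automatically linear (so your worry about residual $y^2$ or $xy$ terms is vacuous — the divergence of a quadratic field has degree at most one), and substituting $q_{11}=\tau_{11}-2p_{22}+2\beta$, $q_{10}=\tau_{10}+2\gamma-p_{21}$ from \eqref{RR011} gives exactly $\mathrm{div}(\X)=2K_g+\tau_{11}x+\tau_{10}$ with $K_g=y+\beta x+\gamma$ the cofactor of \eqref{ORT} supplied by Theorem \ref{PP1}. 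Then for $R=g^{-2}e^{h(x)}$ one has
\begin{equation*}
\frac{\X R}{R}=-2K_g+p_2(x)h'(x),
\end{equation*}
so the single scalar ODE $p_2h'=-(\tau_{11}x+\tau_{10})$ makes $R$ an integrating factor, and in each of the possible factorizations of $p_2$ (two simple roots, a double root, linear, constant) the function $e^{h}$ is of the form \eqref{eq:10}; Singer's theorem then yields a Liouvillian first integral. What your approach buys is uniformity and generality — one formula settles the whole family \eqref{Rf4} at once, and in particular implies Theorem \ref{Fil} — while the paper's explicit hypergeometric first integrals give more concrete information (e.g.\ the non-existence of rational first integrals in the Christopher--Llibre subcase) that your integrating factor does not by itself provide. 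The only caveat is that your proof inherits whatever typographical inconsistencies afflict the paper's statement of \eqref{Rf4} and \eqref{RR011}; the structural identity $\mathrm{div}(\X)-2K_g\in\R[x]$ with $\deg\le 1$ is, however, robust to those.
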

\begin{proposition}\label{HypEquation}
Under the assumptions of Theorem \ref{Rf4} we obtain that the most
general quadratic vector field which admits as a solution the curve
\eqref{ORT} with $a_0$ the hypergeometric functions with $a$ or $b$
are non-positive integer, is
\begin{equation}\label{Rrff4}
\begin{array}{rl}
\dot{x}=&x(1-x),\vspace{0.2cm}\\
\dot{y}=&y^2+\left( 2\beta-a-b+1\right)xy+\left(c-1+2\gamma\right)y+\gamma(\gamma+c-1)\vspace{0.2cm}\\
&+\left(\beta^2+(2-a-b)\beta+(a-1)(b-1)
\right)x^2\vspace{0.2cm}\\
&+\left(c\beta-ab+(1-\gamma)(-2\beta-1+a+b)\right)x
\end{array}
\end{equation}
\end{proposition}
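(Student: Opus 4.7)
The plan is to view Proposition \ref{HypEquation} as a direct specialization of Theorem \ref{A1}, obtained by forcing the auxiliary equation \eqref{Rr01} to coincide with the hypergeometric equation \eqref{HypEq}. The first step is to match \eqref{HypEq} against the generic form $p_2(x)a_0'' + r(x)a_0' + \kappa a_0 = 0$ and read off $p_{22} = -1$, $p_{21} = 1$, $p_{20} = 0$, together with $\tau_{11} = -(a+b+1)$, $\tau_{10} = c$ and $\tau_0 = \kappa = -ab$. The hypothesis that $a$ or $b$ be a non-positive integer is exactly the classical condition for the hypergeometric series to terminate, so $a_0(x)$ is a genuine polynomial and Theorem \ref{A1} does apply.

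Next I would use the identity $r(x) = (2p_{22} + q_{11} - \beta)x + \gamma + p_{21} - q_{10}$ from the statement of Theorem \ref{A1}, together with $r(x) = c - (a+b+1)x$, to solve for the remaining entries of the cofactor $K = y + \beta x + \gamma$: matching the coefficient of $x$ gives $q_{11} = \beta - a - b + 1$, and matching the constant term gives $q_{10} = \gamma + 1 - c$. At this point the system in Theorem \ref{A1} is completely determined up to the two genuinely free parameters $\beta$ and $\gamma$, so ``most general'' simply means the vector field that results from substituting all of the above data into \eqref{Rf4}.

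The final step is that substitution. For the first equation one gets $\dot x = -x^2 + x = x(1-x)$ at once. For $\dot y$, each coefficient in \eqref{Rf4} is an affine expression in $p_{22}, p_{21}, p_{20}, \tau_{11}, \tau_{10}, \tau_0, \beta, \gamma$. Plugging in the values above and simplifying: the coefficient of $xy$ becomes $2\beta - 2p_{22} + \tau_{11} = 2\beta - a - b + 1$; the coefficient of $y$ becomes $2\gamma + c - 1$; the coefficient of $x^2$ collapses, using the identity $(1-a)(1-b) = 1 - a - b + ab$, to $\beta^2 + (2-a-b)\beta + (a-1)(b-1)$; the coefficient of $x$ becomes $c\beta - ab + (1-\gamma)(-2\beta - 1 + a + b)$; and the constant term reduces to $\gamma(\gamma + c - 1)$ because $p_{20} = 0$ kills the other contribution in \eqref{Rf4}. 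These are precisely the coefficients appearing in \eqref{Rrff4}.

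The only real obstacle is bookkeeping in the $x^2$ and $x$ coefficients of $\dot y$, where the various sign conventions interact. I would organize the calculation by first substituting $p_{22} = -1$, $p_{21} = 1$, $p_{20} = 0$ throughout \eqref{Rf4} and only afterwards inserting $\tau_{11}, \tau_{10}, \tau_0$, which keeps the intermediate expressions short and transparent. No new integrability or existence argument beyond Theorem \ref{A1} and the classical termination criterion for ${}_2F_1(a,b;c;x)$ is needed.
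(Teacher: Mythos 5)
Your proposal follows essentially the same route as the paper's proof: identify the hypergeometric equation \eqref{HypEq} with \eqref{Rr01} to read off $p_{22}=-1$, $p_{21}=1$, $p_{20}=0$, $\tau_{11}=-(a+b+1)$, $\tau_{10}=c$, $\kappa=\tau_0=-ab$, invoke the classical termination criterion so that $a_0$ is genuinely a polynomial and Theorem \ref{A1} applies, and then substitute into \eqref{Rf4}. The final coefficients you report all agree with \eqref{Rrff4} (the $x^2$, $x$ and constant terms check against \eqref{RRRf4}), so the computation as a whole goes through and reproduces the paper's result.

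One internal inconsistency should be repaired. You determine $q_{11}$ and $q_{10}$ by matching $r(x)=c-(a+b+1)x$ against the expression $r(x)=(2p_{22}+q_{11}-\beta)x+\gamma+p_{21}-q_{10}$ printed in the statement of Theorem \ref{A1}, obtaining $q_{11}=\beta-a-b+1$ and $q_{10}=\gamma+1-c$. But by \eqref{Nat1} these quantities are exactly the coefficients of $xy$ and of $y$ in $\dot y$, and in your own final system those coefficients are $2\beta-a-b+1$ and $2\gamma+c-1$; the two pairs of values contradict each other. The clash originates in a misprint in the paper: the displayed $r(x)$ in Theorem \ref{A1} is inconsistent with the Fuchs equation \eqref{Rf2}, whose $w'$-coefficient carries $2p_{22}+q_{11}-2\beta$, and with \eqref{RR011}, which is what the paper's proof actually uses and which yields $q_{11}=\tau_{11}-2p_{22}+2\beta=2\beta-a-b+1$ and $q_{10}=\tau_{10}+2\gamma-p_{21}=2\gamma+c-1$, in agreement with \eqref{Rrff4}. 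Your final answer is unaffected only because in the last step you silently switch to the $\tau$-based coefficients of \eqref{Rf4} instead of your derived $q_{11},q_{10}$; as written, the intermediate claim is false and should be replaced by the matching via \eqref{RR011} (or equivalently via \eqref{Rf2}).
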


\begin{corollary}\label{Val1}
The most general quadratic vector field which admits as a solution
the curve  \eqref{ORT} with $a_0$ the Jacobi polynomial is
\begin{equation}\label{Nat}
\begin{array}{rl}
\dot{x}=&1-x^2,\\
\dot{y}=&y^2+(2\beta-A-B)xy+(2\gamma+A-B)y\\
&+\left((\beta^2+\beta-n(n+1)-(1+n+\beta)(A+B)\right)x^2\\
&+\left((\beta-\gamma)A-(\gamma+\beta)B+2\gamma\beta\right)x\\
&+(n+1-\gamma)B+(n+1+\gamma)A+n(n+1)+\gamma^2-\beta .
\end{array}
\end{equation}
The system \eqref{Rrff4}    coincide with system \eqref{Nat} under
the change
\[\begin{array}{rl}
x\longrightarrow &\dfrac{x+1}{2},\quad
y\longrightarrow\dfrac{y}{2},\quad
t\longrightarrow\dfrac{t}{2},\quad \beta+2\gamma\longrightarrow
\gamma,\vspace{0.2cm}\\
a=&-n,\quad b=1+n+A+B,\quad c=1+A.
\end{array}
\]
\end{corollary}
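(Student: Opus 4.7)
The plan is to derive both parts of Corollary \ref{Val1} by direct substitution into the general system \eqref{Rf4} of Theorem \ref{A1}, followed by a coordinate-change verification of the equivalence with \eqref{Rrff4}.

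First I would identify the parameters that the Jacobi equation \eqref{AS1} imposes on the template \eqref{Rr01}: matching coefficients gives $p_{22}=-1$, $p_{21}=0$, $p_{20}=1$, $\tau_{11}=-(A+B+2)$, $\tau_{10}=A-B$, and $\tau_0=\kappa=n(n+A+B+1)$. The defining relation $r(x)=(2p_{22}+q_{11}-\beta)x+\gamma+p_{21}-q_{10}$ from Theorem \ref{A1} then forces $q_{11}=\beta-A-B$ and $q_{10}=\gamma-A+B$. Substituting these values, coefficient by coefficient, into system \eqref{Rf4} should collapse the parameter-laden expressions into the clean form \eqref{Nat}; for example, the $x^2$-coefficient of $\dot y$, namely $2p_{22}^2-(\tau_{11}-\tau_0+3\beta)p_{22}+\beta^2+\beta\tau_{11}$, reduces to $\beta^2+\beta-n(n+1)-(1+n+\beta)(A+B)$, which is precisely the $x^2$-coefficient displayed in \eqref{Nat}.

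For the second assertion I would carry out the prescribed change of variables on \eqref{Rrff4} and check that it lands on \eqref{Nat}. The substitution $x\mapsto(x+1)/2$ together with the time rescaling $t\mapsto t/2$ sends $\dot{x}=x(1-x)$ to $\dot{x}=1-x^2$, while the parameter dictionary $a=-n$, $b=1+n+A+B$, $c=1+A$ turns the hypergeometric equation \eqref{HypEq} into the Jacobi equation \eqref{AS1}, since then $a+b+1=A+B+2$, $-ab=n(1+n+A+B)$, and $c-(a+b+1)\cdot\tfrac{x+1}{2}=\tfrac{1}{2}\bigl(A-B-(A+B+2)x\bigr)$. For the $\dot y$ equation the scaling $y\mapsto y/2$ and the affine reparametrization $\beta+2\gamma\mapsto\gamma$ (needed to absorb the linear shift produced by $x\mapsto(x+1)/2$) should match each of the coefficients of $y^2$, $xy$, $y$, $x^2$, $x$, and the constant term in turn.

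The main obstacle I anticipate is the bookkeeping. Both $\dot y$ expressions carry four to five parameter-dependent coefficients, and since $x\mapsto(x+1)/2$ couples the $x^2$, $x$, and constant terms while the reparametrization $\beta+2\gamma\mapsto\gamma$ simultaneously redistributes contributions between the $y$-term and the constant, the coefficient matching is the most error-prone step. Conceptually, however, nothing beyond Theorem \ref{A1} and the classical form of the Jacobi and hypergeometric equations is required; the proof is essentially a verification.
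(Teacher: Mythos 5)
Your plan is essentially the paper's own proof: read off $\tau_{11}=-(A+B+2)$, $\tau_{10}=A-B$, $\tau_0=n(n+A+B+1)$ and $p_{22}=-1$, $p_{21}=0$, $p_{20}=1$ from the Jacobi equation \eqref{AS1} and substitute into \eqref{Rf4}; your sample reduction of the $x^2$-coefficient is correct. One concrete issue: the values $q_{11}=\beta-A-B$ and $q_{10}=\gamma-A+B$ that you extract from the formula for $r(x)$ printed in the statement of Theorem \ref{A1} are \emph{not} the $xy$- and $y$-coefficients of \eqref{Nat}, which are $2\beta-A-B$ and $2\gamma+A-B$. The paper instead uses the relations \eqref{RR011}, namely $q_{11}=\tau_{11}-2p_{22}+2\beta$ and $q_{10}=\tau_{10}+2\gamma-p_{21}$, which do reproduce those coefficients; the discrepancy comes from an internal inconsistency of the paper (the expression for $r(x)$ in Theorem \ref{A1} does not match the $w'$-coefficient of the Fuchs equation \eqref{Rf2} or the identities \eqref{RR011} used in its proof). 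Because \eqref{Rf4} is parametrized directly by the $\tau_{ij}$ and not by $q_{11},q_{10}$, your substitution still lands on \eqref{Nat}, but if you were to assemble the system from \eqref{Nat1} using your $q$'s you would get a different vector field; derive $q_{11},q_{10}$ from the $xy$- and $y$-coefficients of \eqref{Rf4} instead. Finally, note that the paper gives no verification whatsoever of the change-of-variables assertion, so your coefficient-by-coefficient check (with the new $\gamma$ equal to $\beta+2\gamma$ in the old parameters, absorbing the shift created by $x\mapsto(x+1)/2$) is an addition to, not a restatement of, the published argument.
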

\begin{corollary}\label{Val2}
The most general quadratic vector field which admits as a solution
the Laguerre polynomials  is
\begin{equation}\label{Rf81}
\begin{array}{rl}
\dot{x}=&x,\vspace{0.2cm}\\
\dot{y}=&y^2+(2\beta-1)xy+(2\gamma+A)\,y+(\beta-1)\beta\,x^2\\
&+(\beta(A+2\gamma-1)+n+1-\gamma)x +A\gamma+\gamma^2.
\end{array}
\end{equation}
The system \eqref{Rrff4} coincide with system \eqref{Rf81} under the
change
\[\begin{array}{rl}
x\longrightarrow \dfrac{x}{b},\quad
\beta\longrightarrow\,\beta\,b,\vspace{0.2cm}\\
a=-n,\quad c=A+1,
\end{array}
\]
and tend to infinity the parameter $b.$
\end{corollary}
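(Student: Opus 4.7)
The plan is twofold. First I will derive \eqref{Rf81} by specializing Theorem \ref{A1} to the Laguerre differential equation \eqref{AS2}. Second, I will recover \eqref{Rf81} from the hypergeometric system \eqref{Rrff4} via the stated confluent scaling followed by the limit $b\to\infty$; this is the vector-field counterpart of the classical Jacobi-to-Laguerre confluence of orthogonal polynomials.

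For the first step I take $p_2(x)=x$ in \eqref{Rr01}, so $p_{22}=0$, $p_{21}=1$, $p_{20}=0$, and identify \eqref{Rr01} with \eqref{AS2}. This yields $\kappa=\tau_0=n$, $\tau_{11}=-1$, $\tau_{10}=A+1$, and matching $r(x)=(2p_{22}+q_{11}-\beta)x+\gamma+p_{21}-q_{10}$ with $A+1-x$ pins down $q_{11}=\beta-1$ and $q_{10}=\gamma-A$. Substituting into each coefficient of the general system \eqref{Rf4} then directly reproduces \eqref{Rf81}: the $x^2$ coefficient collapses to $\beta^2-\beta=(\beta-1)\beta$, the $y$ coefficient to $2\gamma+A$, the constant to $A\gamma+\gamma^2$, and the $x$ coefficient to $\beta(A+2\gamma-1)+n+1-\gamma$.

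For the second step I apply $x\mapsto x/b$, $\beta\mapsto\beta b$, together with $a=-n$, $c=A+1$, to \eqref{Rrff4}. The first equation becomes $\dot{x}=x(1-x/b)\to x$ as $b\to\infty$. Expanding the coefficients of $\dot{y}$ in powers of $1/b$, one checks that the coefficient of $xy$, equal to $(2\beta b+n+1-b)/b$ after the scaling, tends to $2\beta-1$; the coefficient of $x^2$, obtained from $\beta^2 b^2+(2+n-b)\beta b+(-n-1)(b-1)$ divided by $b^2$, tends to $(\beta-1)\beta$; and the coefficient of $x$ tends to $\beta(A+2\gamma-1)+n+1-\gamma$. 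The remaining coefficients ($y^2$, $y$, and the constant) are invariant under the scaling and already match \eqref{Rf81}.

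The main obstacle, though routine, is the cancellation of the leading $O(b)$ terms in the coefficient of $x$ after the scaling: the three contributions $(A+1)\beta b$, $nb$, and $(1-\gamma)(-2\beta b+b)$ must conspire to leave only the $O(1)$ remainder $\beta(A+2\gamma-1)+n+1-\gamma$. Once this cancellation and the simpler analogous ones for the coefficients of $xy$ and $x^2$ are verified, both parts of the corollary follow by direct substitution, establishing simultaneously that \eqref{Rf81} is the most general quadratic vector field associated with Laguerre polynomials by Theorem \ref{A1} and that it arises as the confluent $b\to\infty$ limit of \eqref{Rrff4}.
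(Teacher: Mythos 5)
Your proof is correct, and it is both more careful and more complete than the paper's own argument. The paper obtains \eqref{Rf81} by pushing $a=-n$, $c=A+1$ (with the confluent scaling left implicit) through the hypergeometric coefficient formulas \eqref{00}; the intermediate values it records ($q_{10}=1+2\gamma$, $q_{20}=\gamma+\gamma^2$, $q_{21}=2\gamma\beta+1-n-\gamma$) are actually inconsistent with the final system \eqref{Rf81} (the parameter $A$ has been dropped and the sign of $n$ slips), and the second sentence of the corollary --- that \eqref{Rrff4} degenerates to \eqref{Rf81} under the stated scaling as $b\to\infty$ --- is never verified at all. You instead specialize \eqref{Rf4} directly through the Laguerre data $p_2(x)=x$, $\tau_{11}=-1$, $\tau_{10}=A+1$, $\tau_0=n$, which reproduces every coefficient of \eqref{Rf81} correctly, and then separately carry out the confluent limit; the latter is the only actual proof of the coincidence claim on record, so your version buys a genuine completeness gain. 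Two small caveats. First, your values $q_{11}=\beta-1$, $q_{10}=\gamma-A$ come from the formula for $r(x)$ as misprinted in the statement of Theorem \ref{A1}; reading off the coefficient of $w'$ in \eqref{Rf2} (or using \eqref{RR011}) gives $q_{11}=\tau_{11}-2p_{22}+2\beta=2\beta-1$, which is what actually appears as the $xy$-coefficient of \eqref{Rf81}. This is harmless in your argument because the substitution into \eqref{Rf4} uses only the $\tau_j$ and $p_{2j}$, but the stated $q_{11},q_{10}$ should be corrected. Second, in the $x$-coefficient after the scaling there is no divergence to cancel: the numerator $(A+1)\beta b+nb+(1-\gamma)(-2\beta b-1-n+b)$ is only $O(b)$, so after division by $b$ each of the three contributions you list is already finite and they simply sum to $\beta(A+2\gamma-1)+n+1-\gamma$; the genuine $O(b^2)$ cancellation occurs only in the $x^2$-coefficient, where $(2-a-b)\beta\mapsto(2+n-b)\beta b$ supplies the $-\beta b^2$ that survives as $-\beta$ in the limit.
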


\begin{corollary}\label{Val3}
The most general quadratic vector field which admits as a solution
the curve  \eqref{ORT} with $a_0$ the  Hermite polynomial is
\begin{equation}\label{AA}
\begin{array}{rl}
\dot{x}=&1,\\
\dot{y}=&y^2+(2\beta-1)xy+2\gamma y\\
&+(\beta^2-2\beta)x^2+2\gamma(\beta-1)x+ \gamma^2-\beta+2(n+1)
\end{array}
\end{equation}
\end{corollary}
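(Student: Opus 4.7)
The plan is to derive this corollary directly from Theorem~\ref{A1} by specializing to the Hermite case. The Hermite equation~\eqref{AS3} reads $f'' - xf' + nf = 0$, so matching it with the auxiliary equation~\eqref{Rr01} forces $p_2(x) \equiv 1$ (i.e.\ $p_{22} = p_{21} = 0$ and $p_{20} = 1$), together with $r(x) = -x$ and $\kappa = n$. This already yields $\dot x = 1$, in agreement with~\eqref{AA}. Using the explicit form $r(x) = (2p_{22}+q_{11}-\beta)x + (\gamma+p_{21}-q_{10})$ supplied by Theorem~\ref{A1}, the condition $r(x) = -x$ then fixes the ``convenient constants'' appearing in~\eqref{Rf4} as $\tau_{11} = -1$, $\tau_{10} = 0$, and finally $\tau_0 = \kappa = n$.

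With these values in hand, I would substitute $p_{22} = p_{21} = 0$, $p_{20} = 1$, $\tau_{11} = -1$, $\tau_{10} = 0$, $\tau_0 = n$ into each coefficient of the general quadratic system~\eqref{Rf4} and simplify. The coefficients of $xy$, $y$, $x^2$, $x$, and the constant term are then expected to collapse precisely to those displayed in~\eqref{AA}. The invariant algebraic curve~\eqref{ORT} then reads
\[
a_0(x)\bigl(y - (\beta - q_{11})x - (\gamma - q_{10})\bigr) + a_0'(x) = 0,
\]
where $a_0$ is the Hermite polynomial of degree $n$; since $\deg a_0 = n$, this curve has degree $n+1$, and letting $n$ range over $\mathbb{N}$ produces invariant algebraic curves of arbitrarily high degree, as announced in the introduction.

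The main obstacle is the bookkeeping in the substitution step: each coefficient in~\eqref{Rf4} depends on several of the parameters $\beta, \gamma, p_{22}, p_{21}, p_{20}, \tau_{11}, \tau_{10}, \tau_0$ simultaneously, and one must carry through the specialization carefully so that the polynomial expressions collapse exactly to match~\eqref{AA} term by term. No deeper argument is needed beyond Theorem~\ref{A1} and the classical fact that~\eqref{AS3} admits a polynomial solution of degree $n$ for every $n \in \mathbb{N}$, namely the Hermite polynomial.
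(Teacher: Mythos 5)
Your route is essentially the paper's: the paper likewise reads off the parameters by matching the Hermite equation with \eqref{Rr01} (it records $q_{11}=2\beta-1$, $q_{10}=2\gamma$, then computes $q_{20},q_{21},q_{22}$) and substitutes into the general quadratic system, so there is no methodological difference.

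One caveat about the step you deferred as ``bookkeeping,'' because that is exactly where the match fails. With the normalization \eqref{AS3}, i.e.\ $f''-xf'+nf=0$, your specialization $p_{22}=p_{21}=0$, $p_{20}=1$, $\tau_{11}=-1$, $\tau_{10}=0$, $\tau_0=n$ in \eqref{RRRf4} yields $q_{22}=\beta^{2}+\beta\tau_{11}=\beta^{2}-\beta$, $q_{21}=\gamma(2\beta-1)$ and $q_{20}=\gamma^{2}-\beta+n+1$, which do \emph{not} coincide with the terms $(\beta^{2}-2\beta)x^{2}$, $2\gamma(\beta-1)x$ and $\gamma^{2}-\beta+2(n+1)$ displayed in \eqref{AA}. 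Those three terms correspond to the other standard normalization $f''-2xf'+2nf=0$ (i.e.\ $\tau_{11}=-2$, $\tau_0=2n$), which would however change the $xy$-coefficient to $2\beta-2$ rather than the stated $2\beta-1$. So \eqref{AA} as printed mixes the two Hermite conventions, and the paper's own proof in fact also obtains $q_{22}=\beta^{2}-\beta$, contradicting its own statement. Your method is sound and your identification of the parameters is correct, but the promised term-by-term collapse onto \eqref{AA} does not happen; to complete the proof you must either carry out the substitution into \eqref{Rf4} and correct the displayed system, or fix one normalization of the Hermite equation and recompute consistently, rather than asserting the coefficients ``are expected to collapse.''
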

differential system \eqref{Rf4} and its  particular cases admits
algebraic curves of arbitrary high degree.
\begin{theorem}\label{Fil}
Differential systems \eqref{Rrff4},\,\eqref{Nat},\,\eqref{Rf81} and
\eqref{AA} are Liouvillian integrable.
\end{theorem}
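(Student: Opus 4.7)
The plan is to prove all four integrability statements uniformly by exhibiting, for each of the systems \eqref{Rrff4}, \eqref{Nat}, \eqref{Rf81}, \eqref{AA}, an explicit integrating factor $R$ of the generalized Darboux form \eqref{eq:10}. By Singer's theorem this is equivalent to Liouvillian integrability, and an explicit Liouvillian first integral can then be recovered using the formula $H=-\int R(x,y)P(x,y)\,dy+h(x)$ recalled in the introduction.

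The Darboux data come from two sources. First, Theorem \ref{PP1} (with the normalization $q_0=\alpha=1$) supplies, in every case, the Darboux polynomial $g=a_0(y-(\beta-q_{11})x-(\gamma-q_{10}))+p_2 a'_0$ with cofactor $K_g=y+\beta x+\gamma$. Second, the factorization of $p_2(x)$ yields invariant lines: $x=0$ and $1-x=0$ (with cofactors $1-x$ and $-x$) for \eqref{Rrff4}; $1\pm x=0$ (with cofactors $-(1+x)$ and $1-x$) for \eqref{Nat}; $x=0$ (with cofactor $1$) for \eqref{Rf81}; and none for \eqref{AA}. To compensate for the invariant lines lost when $\deg p_2\le 1$, I would adjoin exponential factors: $E_1=e^x$, whose cofactor $\X(E_1)/E_1=\dot x$ is a polynomial of degree at most one in both \eqref{Rf81} and \eqref{AA}, and $E_2=e^{x^2/2}$, whose cofactor $x\dot x=x$ is of degree one in \eqref{AA}. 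These are bona fide exponential factors precisely because the corresponding cofactors are polynomials of degree at most one.

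Writing $R=g^{\lambda}\prod_i g_i^{\lambda_i}\prod_j E_j^{\mu_j}$, the integrating factor condition becomes the cofactor identity
\[
\lambda K_g+\sum_i\lambda_i K_i+\sum_j\mu_j L_j=-\text{div}\,\X .
\]
Since the variable $y$ appears only in $K_g$ and the $y$-coefficient of $\text{div}\,\X$ is $2q_0=2$ in every case, matching $y$-coefficients forces $\lambda=-2$. The residue $-\text{div}\,\X-\lambda K_g$ is then a polynomial of degree at most one in $x$, which has to be matched by the two remaining cofactors; this yields a $2\times 2$ linear system that can be checked to be non-degenerate case by case. Solving it produces the Darboux integrating factors $R=x^{-c}(1-x)^{c-a-b-1}g^{-2}$ for \eqref{Rrff4}, $R=(1-x)^{-B-1}(1+x)^{-A-1}g^{-2}$ for \eqref{Nat}, $R=x^{-A-1}e^{x}g^{-2}$ for \eqref{Rf81}, and $R=e^{x^2/2}g^{-2}$ for \eqref{AA}.

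The only real obstacle is the case-by-case verification of the divergence computations and of the non-degeneracy of the four $2\times 2$ linear systems; this is elementary but tedious. Conceptually the key point is the systematic use of exponential factors in the Laguerre and Hermite cases to replace the invariant lines that are lost when $p_2$ has fewer than two distinct roots; this is what allows the same argument to go through uniformly across the four families. Once $R$ is produced, each expression is manifestly of the form \eqref{eq:10}, and Singer's theorem gives the desired Liouvillian integrability.
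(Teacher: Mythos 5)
Your argument is correct, but it takes a genuinely different route from the paper's. The paper proves integrability of \eqref{Rrff4} by writing down an explicit first integral $F=x^{1-c}g_1/g_2$ built from the two independent hypergeometric solutions of the underlying second--order equation, then specializing the parameters to the family of \cite{CLL} and invoking that reference; it disposes of \eqref{Nat} and \eqref{Rf81} by appealing to the coordinate changes of Corollaries \ref{Val1} and \ref{Val2}, and treats \eqref{AA} separately as a Riccati equation with the particular solution furnished by the Hermite polynomial, citing \cite{CG}. You instead run a single Darboux--theoretic computation for all four systems: the curve \eqref{ORT} with cofactor $y+\beta x+\gamma$ forces the exponent $-2$ by matching the $y$--coefficient of the divergence, and the remaining residue
$-\text{div}\,\X+2K_g$, which equals $-\tau_1(x)$ (i.e.\ $c-(a+b+1)x$, $A-B-(A+B+2)x$, $A+1-x$, $-x$ in the four cases), is absorbed by the invariant lines coming from the factorization of $p_2$ together with the exponential factors $e^{x}$ and $e^{x^2/2}$ when $p_2$ degenerates. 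I have checked the cofactor bookkeeping in all four cases and your integrating factors $x^{-c}(1-x)^{c-a-b-1}g^{-2}$, $(1-x)^{-B-1}(1+x)^{-A-1}g^{-2}$, $x^{-A-1}e^{x}g^{-2}$ and $e^{x^2/2}g^{-2}$ are correct. Your route buys self--containedness and uniformity: it does not lean on \cite{CLL} or \cite{CG}, and it avoids the paper's use of non--terminating hypergeometric series inside $g_1,g_2$, which are not obviously generalized Darboux functions in the sense of \eqref{eq:10}. What it gives up is explicitness of the first integral itself, which in your scheme must still be recovered by the quadrature $H=-\int RP\,dy+h(x)$, whereas the paper (when its formulas are verified) exhibits $H$ in closed form.
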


\section{Proof of the main results}

\begin{proof}[Proof of Theorem \ref{A1}]

In order to obtain a polynomial solutions of the equation
\eqref{Rf2}  we consider the particular case when the quadratic
polynomial $q_2=q_2(x)$ i such that
\begin{equation}\label{Rtf4}
 q_2=(\kappa+r')p_2 -(\beta x+\gamma )\left((\beta
-q_{11})x+\gamma-q_{10}\right),
\end{equation}
where $\kappa$ is a constant.

\smallskip

 Under this condition equation \eqref{Rf2} takes the form
 \eqref{Rr01}.   By compare  equation \eqref{Rr01} and \eqref{RR01} we obtain that
\begin{equation}\label{RR011}
\tau_2(x)=p_2(x),\quad q_{11}=\tau_{11}-2p_{22}+2\beta,\quad
q_{10}=\tau_{10}+2\gamma-p_{21},\quad \kappa=\tau_0
\end{equation}
By solving  equation \eqref{Rtf4} with respect to $q_{22},\,q_{21}$
 and $q_{20}$ and in view of \eqref{RR011} we deduce that
\begin{equation}\label{RRRf4}
\begin{array}{rl}
q_{22}=&2p^2_{22}-\left(\tau_{11}-\tau_0+3\beta\right)p_{22}+\beta^2+\beta\tau_{11}\vspace{0.2cm}\\
q_{21}=&2p_{22}-\left(2\beta-\tau_0+\tau_{11}\right)p_{21}+
\gamma\left(\tau_{11}-2p_{22}+2\beta\right)+\beta\tau_{10}\vspace{0.2cm}\\
q_{20}=&p_{20}\left(2p_{22}-\tau_{11}+\tau_0-\beta\right)+\gamma\left(\gamma+\tau_{10}-p_{21}\right),
\end{array}
\end{equation}
By inserting \eqref{RRRf4} into \eqref{Nat1} we obtain \eqref{Rf4}.
 \end{proof}
\begin{proof}[Proof of Corollary \ref{HypEquation}]
The hypergeometric functions are solutions of the differential
equations \eqref{HypEq}. Thus by comparing with \eqref{Rr01} we
deduce that (we take $\alpha=1$)
\[
q_{11}=2\beta-a-b+1,\quad q_{10}=2\gamma-1+c,\quad \kappa=-ab
\]
consequently from \eqref{RRRf4} we give that
\begin{equation}\label{00}\begin{array}{rl}
q_{20}=&\gamma(\gamma+c-1),\\
q_{21}=&(1-\gamma)(a+b-2\beta-1)+c\beta-ab,\\
q_{22}=&\beta^2+(2-a-b)\beta+(b-1)(a-1).
\end{array}
\end{equation}
 Inserting into \eqref{Rf4} we finally deduce
\eqref{Rrff4}.
\end{proof}

\begin{proof}[Proof of Corollary \ref{Val1}]
{F}rom \eqref{AS1} follows that $\tau_{11}=-A-B-2,$ $\tau_{10}=A-B$
and $\tau_0=n(n+A+B+1).$ Consequently, in view of \eqref{RR011} and
\eqref{RRRf4} we obtain
\[\begin{array}{rl}
p_{22}=&-1,\quad p_{21}=0,\quad p_{20}=1,\vspace{0.2cm}\\
q_{11}=&2\beta-A-B,\quad q_{10}=2\gamma+A-B,\vspace{0.2cm}\\
q_{22}=&\beta^2+\beta-n(n+1)-(1+n+\beta)(A+B),\vspace{0.2cm}\\
q_{20}=&\gamma^2-\beta+n(n+1)+(1+n+\gamma)A+(n+1-\gamma)B.
\end{array}
\]
Inserting into \eqref{Rf4} we have the proof of the corollary.
\end{proof}
\begin{proof}[Proof of Corollary \ref{Val2}]
The Laguerre polynomials are solutions of the differential equations
\eqref{AS1}, thus from \eqref{Rr01} we have that
\[
q_{11}=-1+2\beta,\quad q_{10}=1+2\gamma,\quad \kappa=-n
\]
consequently in view of \eqref{00} we deduce that
\[
q_{20}=\gamma+\gamma^2,\quad q_{21}=2\gamma\beta+1-n-\gamma,\quad
q_{22}=\beta^2-\beta.
\]
Inserting into \eqref{Rf4} we finally deduce \eqref{Rf81}.
\end{proof}
\begin{proof}[Proof of Corollary \ref{Val3}]
The Hermite polynomials are solutions of the differential equations
\eqref{AS1}, thus from \eqref{Rr01} we have that
\[
q_{11}=2\beta-1,\quad q_{10}=2\gamma,\quad \kappa=-\lambda
\]
consequently in view of \eqref{00} we deduce that
\[
q_{20}=\lambda-\beta+1+\gamma^2,\quad q_{21}=\gamma(2\beta-1),\quad
q_{22}=\beta^2-\beta.
\]
Inserting into \eqref{Rf4} we finally deduce \eqref{Rf81}.
\end{proof}
\section{Proof of Theorem \ref{Fil}}

\begin{proof}[Proof of Theorem \ref{Fil}]
After some computations it is possible to show that  quadratic
differential system \eqref{Rrff4} admits the following first
integral
\[F=\dfrac{x^{1-c}g_1(x,y)}{g_2(x,y)},\]
where
\[
\begin{array}{rl}
g_1=&\left(y+(1-b+\beta)x+\gamma+c-1-a\right)F(1+a+c.1+b-c,2-c;x)\vspace{0.2cm}\\
&+(1+a-c)(1-x)F(1+a,b,c;x)+(1+a-c),\vspace{0.2cm}\\
g_2=&\left(y+(1-b+\beta)x+\gamma+c-1-a\right)F(a,b,c;x)\vspace{0.2cm}\\
&+a(1-x)F(2+a+c.1+b-c,2-c;x)+(1+a-c)
\end{array}
\]
where $F(a,b,c;x):= \,_2F_1(a,b,c;x)$ is the hypergeometric
function, where $2$ refers to number of parameters in numerator and
 1 refers to number of parameters in denominator.

 \smallskip

 In particular, differential system \eqref{Rrff4} for the values of the parameters
\begin{equation}\label{sst}
\beta=a+b-\dfrac{ab}{c}-1,\quad \gamma=1-c,
\end{equation}
takes the form
\[\begin{array}{rl}
\dot{x}=&x(1-x),\vspace{0.2cm}\\
\dot{y}=&y^2+(1-c)y+\left(a+b-1-\dfrac{2ab}{c}\right)xy+\dfrac{ab(b-c)(a-c)}{c^2}x^2.
\end{array}
\]
In \cite{CLL} was proved that this system which admits an algebraic
curves of arbitrary high degree is not rational integrable but is
Liouvillian integrable. Indeed, by considering that this system
admits four algebraic invariant curves
\[\begin{array}{rl}
g_1=&x=0,\quad g_2=x-1=0,\vspace{0.2cm}\\
g_3=&F_1\left(y-\dfrac{ab}{c}x\right)+x(1-x)F'_1=0\vspace{0.2cm}\\
g_4=&F_2\left(y-\left(\dfrac{ab}{c}+1-c\right)x-c+1\right)+x(1-x)F'_2=0,
\end{array}
\]
where $F_1=F(a,b,c,x),\,F_2=F(1+a-c,1+b-c,2-c,x)$ and $a$ is
negative integer. By considering that the cofactor are
\[K_1=x-1,\quad K_2=x,\quad K_3=y-\dfrac{(b-c)(a-c)}{c}x,\quad
K_4=y+\left(b+a-1-\dfrac{ba}{c}\right)x+1-c,\]respectively,  it is
easy to obtain the existence of the Liouvillian first integral
\[F=\dfrac{x^{c-1}g_3}{g_4}\] here $c$ is a
positive irrational number.

\smallskip

In view of the respectively corollary we have that the quadratic
systems \eqref{Nat} and \eqref{Rf81} are Liouvillian integrable.

\smallskip

The integrability of  system \eqref{AA} follows from the fact that
this system can be rewritten as follows
\[
\dfrac{d y}{dx}=y^2+(2\beta-1)xy+2\gamma y
+(\beta^2-2\beta)x^2+2\gamma(\beta-1)x+ \gamma^2-\beta+2(n+1)
\]
which is the Riccati  equations with one solution
\[g_1=\left(y+(\beta-1)x+\gamma\right)H(x)+H'(x)=0,\]
where $H$ is the Hermite polynomial \cite{CG}. In short the theorem
is proved.
\end{proof}

\subsection*{Acknowledgments}
This work was partly supported by the Spanish Ministry of Education
through projects DPI2007-66556-C03-03, TSI2007-65406-C03-01
"E-AEGIS" and Consolider CSD2007-00004 "ARES".



\begin{thebibliography}{99}
\bibitem{Abr} {\sc M. Abramowitz and I. Stegun},
{ Handbook of Mathematical Functions with Formulas, Graphs, and
Mathematical Tables}, New York, Dover, 1965.

\bibitem {Campilo}  {\sc A. Campilo and M.M. Carnicer,} Proximily
inequalities and bounds fot the degree of invariant curves by
foliation of $\mathbb{P}^2_{\mathbb{C}}$, {\em Trans. Amer. Math.
Soc.} \textbf{349} (1997), 2221--2228.
\bibitem {Ca}  {\sc M.M. Carnicer,} { The Poincar\'e problem in the
nondicritical case}, Annals of Math. {\bf 140} (1994), 289--294.

\bibitem {CLN} {\sc D. Cerveau and A. Lins Neto},
{ Holomorphic foliations in $\mathbb{C}\times\mathbb{P}^2$ having an
invariant algebraic curve}, Ann. of Math.\textbf{140} (1994),
289--294.


\bibitem {CG} {\sc J. Chavarriga and M. Grau},
{ A family of non--Darboux integrable quadratic polynomial
differential systems with algebraic solutions of arbitrary high
degree,} Applied Mathematics Letters \textbf{16} (2003), 833--837.


\bibitem {CLL} {\sc C. Christopher, J. Llibre,} A family
of quadratic differential systems with invariant algebraic curves of
arbitrarily hight degree without rational first integrals, {\em
Proceding of the American Mathematical Society,} \textbf{7} (2001),
2025--2030.

\bibitem{DLA} {\sc F. Dumortier, J. Llibre and J.C. Art\'{e}s},
{Qualitative theory of planar differential systems}, Universitext,
Springer, 2006.

\bibitem{Jou} {\sc J.P. Jouanolou},
{\it Equations de Pfaff algebr\'aiques}, Lectures notes in
Mathematics {\bf 708}, Springer-Verlag, Berl\'{i}n, 1979.

\bibitem{Ll} {\sc J. Llibre}, { Integrability of polynomial differential systems},
Handbook of Differential Equations, Ordinary Differential Equations,
{\em Eds. A. Ca\~nada, P. Drabek and A. Fonda, Elsevier} (2004), pp.
437--533.

\bibitem{LR0} {\sc J. Llibre and R. Ram\'{\i}rez},
{Inverse problems in ordinary differential equations and
applications}, book, to appear.

\bibitem{LRS}  {\sc J. Llibre, R. Ram\'{\i}rez and N. Sadovskaia,}
Inverse problems in ordinary differential equations. Applications to
mechanics, preprint, (2012).

\bibitem{LRS2} {\sc J. Llibre, R. Ram\'{\i}rez and N. Sadovskaia},
Planar vector fields with a given set of orbits,
{\em\,J.Dyn.Diff.Equat.} \textbf{23}, (2011), 885--902.

\bibitem{Ol} {\sc J. Moulin Ollagnier,}
{About a conjecture on quadratic vector fields}, Journal of Pure and
Applied Algebra {\bf 165} (2001), 227--234.


\bibitem{Po}  {\sc H. Poincar\'{e},}
{Hertz's ideas in mechanics, in addition to H. Hertz, Die Prizipien
der Mechanik in neum Zusammemhauge dargestellt}, 1894.

\bibitem{S} {\sc M.F. Singer},
{\it Liouvillian first integrals of differential equations}, Trans.
Amer. Math. Soc. {\bf 333} (1992), 673--688.

\end{thebibliography}
\end{document}